\newtheorem{lemma}{Lemma}
\newtheorem{theorem}{Theorem}
\newenvironment{proof}{\noindent \emph{Proof. }}{\hfill \hbox{\rlap{$\sqcap$}$\sqcup$}\\}
\title{The Ammann-Beenker Tilings Revisited}
\author{Nicolas Bédaride\footnote{LATP, Univ. Aix-Marseille, nicolas.bedaride@latp.univ-mrs.fr} \and Thomas Fernique\footnote{LIPN, CNRS \& Univ. Paris 13, thomas.fernique@lipn.univ-paris13.fr}}
\date{}
\begin{document}

\maketitle

\begin{abstract}
This paper introduces two tiles whose tilings form a one-parameter family of tilings which can all be seen as digitization of two-dimensional planes in the four-dimensional Euclidean space.
This family contains the Ammann-Beenker tilings as the solution of a simple optimization problem.
\end{abstract}

\section{Introduction}

Having decided to retile your bathroom this week-end, you go to your favorite retailer of construction products.
There, you see a unusual special offer on two strange notched tiles (Fig.~\ref{fig:tiles}): ``Pay the squares cash, get the rhombi for free!''\\

\begin{figure}[hbtp]
\centering
\includegraphics[width=0.4\textwidth]{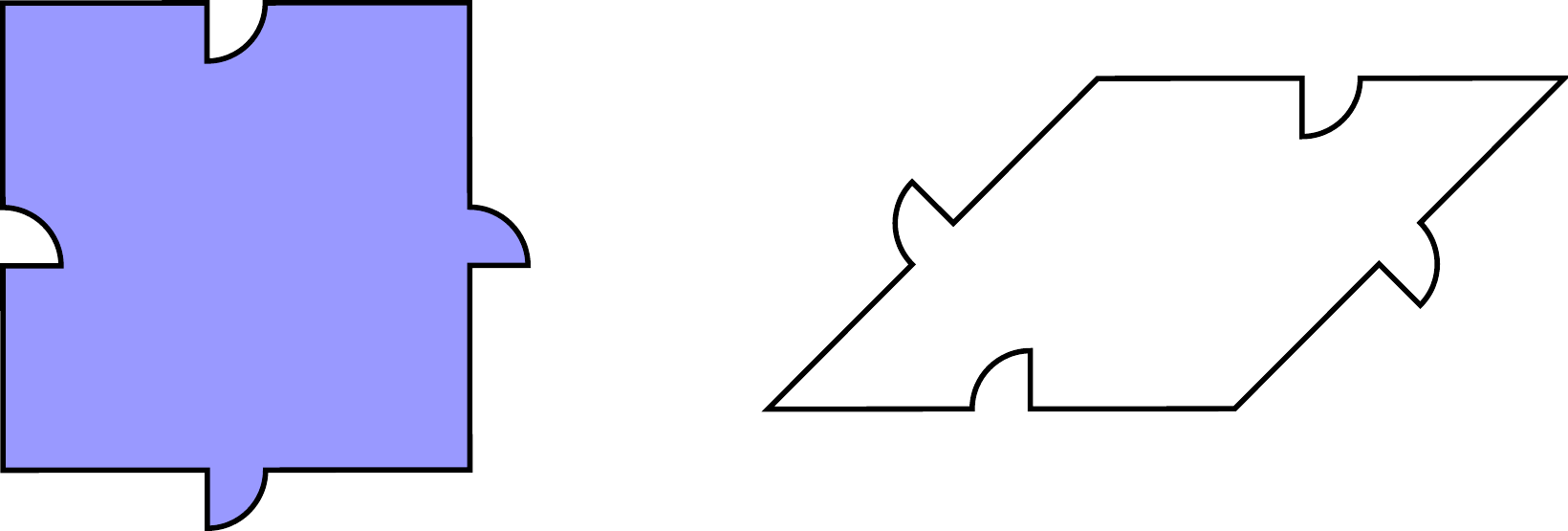}
\caption{Two notched tiles.}
\label{fig:tiles}
\end{figure}

Fearing that this might be a scam, you try to figure out how your bathroom could be tiled at little cost.
After careful consideration, you see that the possible tilings are exactly those where any two rhombi adjacent or connected by lined up squares have different orientations (see Fig.~\ref{fig:alternance}).
In particular, rhombi only do not tile, so you would have to buy at least some squares.
You could of course tile with squares only (on a grid), but this would be missing this special offer!\\

\begin{figure}[hbtp]
\centering
\includegraphics[width=0.9\textwidth]{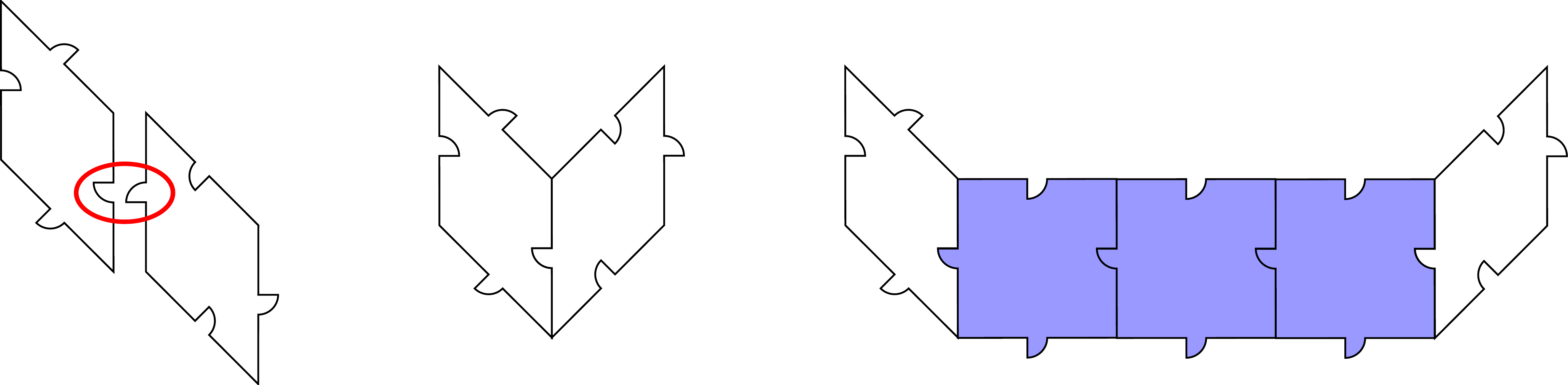}
\caption{Two rhombi match only if they have different orientations.
This still holds with lined up squares between them, since those just carry the notching.}
\label{fig:alternance}
\end{figure}

We will show that the cheapest (if not the simplest) way to tile your bathroom is to form a non-periodic tiling, namely an {\em Ammann-Beenker tiling}.
Furthermore, we will show that the set of all possible tilings form a one-parameter family of tilings which can all be seen as digitization of two-dimensional planes in the four-dimensional Euclidean space.
Fig.~\ref{fig:tilings} depicts some possible tilings, with the rightmost one being an Ammann-Beenker tiling.\\

This is of course not only of interest to tile bathrooms, but it could provide a new insight into the theory of quasicrystals.
Indeed, digitizations of irrational planes in higher dimensional spaces (also called {\em projection tilings}) are a common model of quasicrystals, and the above results give an example of how very simple local constraints can enforce long range order, with the non-periodicity simply coming from tile proportions.
In particular, slight variations of tile proportions around those of a non-periodic tiling can lead to close periodic tilings, reminding approximants of quasicrystals.\\

\begin{figure}[hbtp]
\centering
\includegraphics[width=0.31\textwidth]{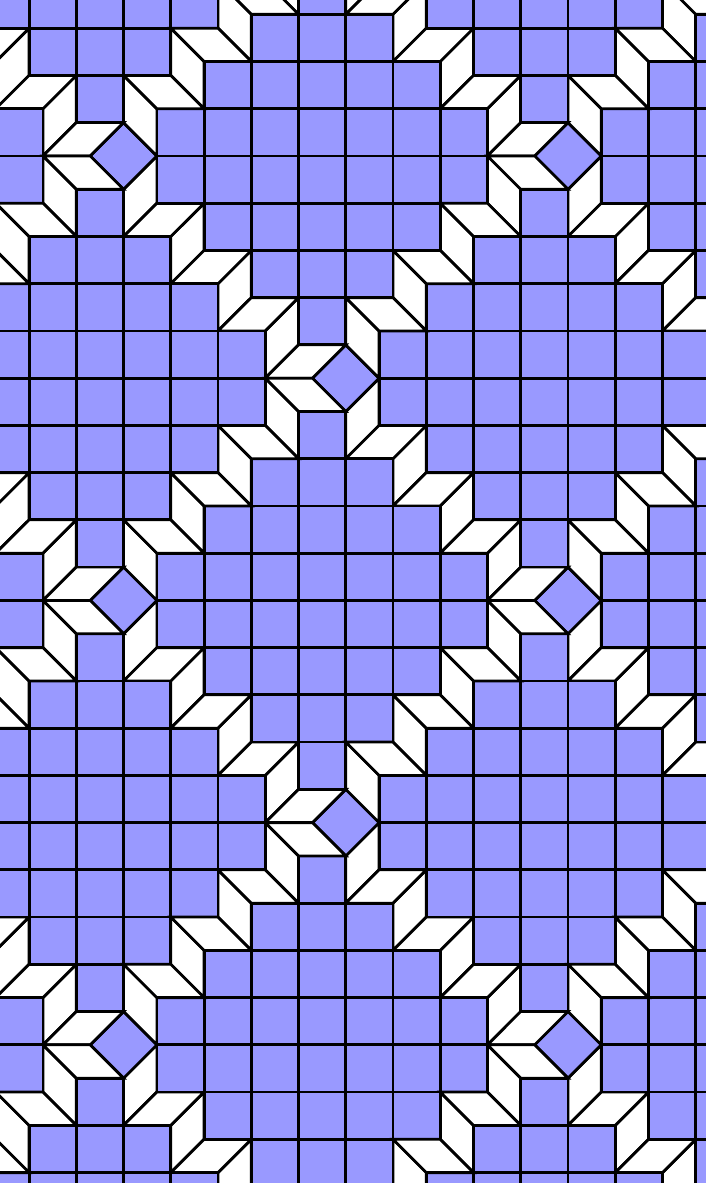}
\hfill
\includegraphics[width=0.31\textwidth]{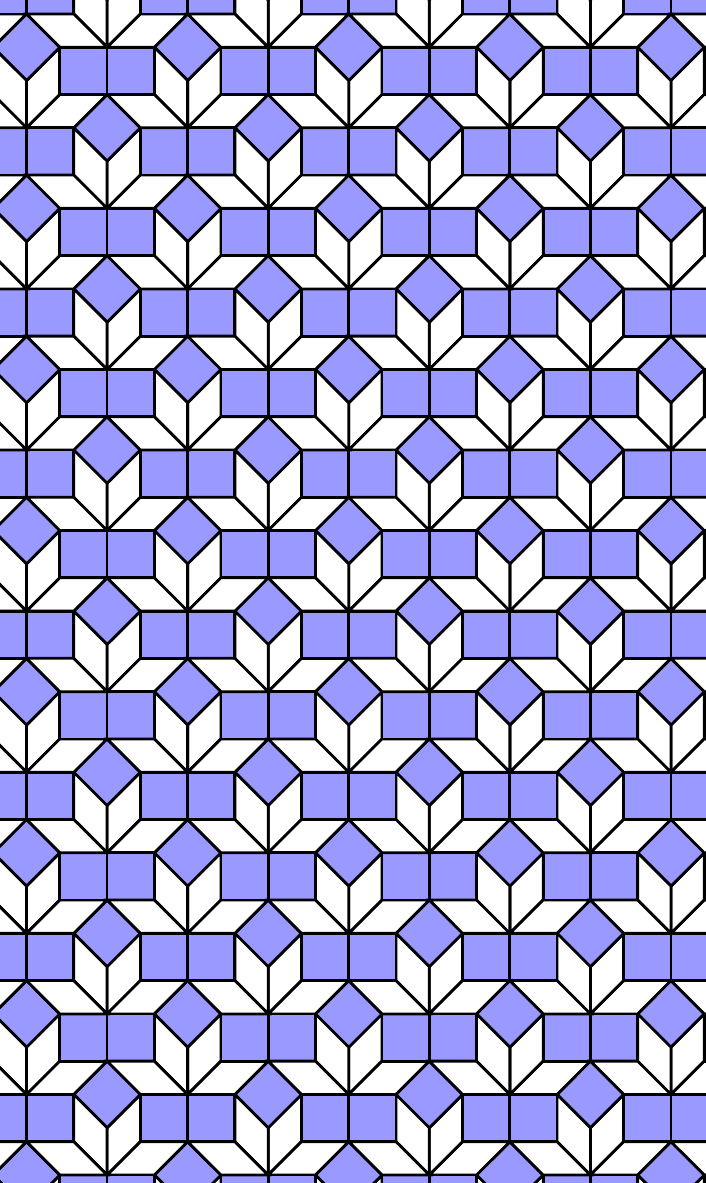}
\hfill
\includegraphics[width=0.31\textwidth]{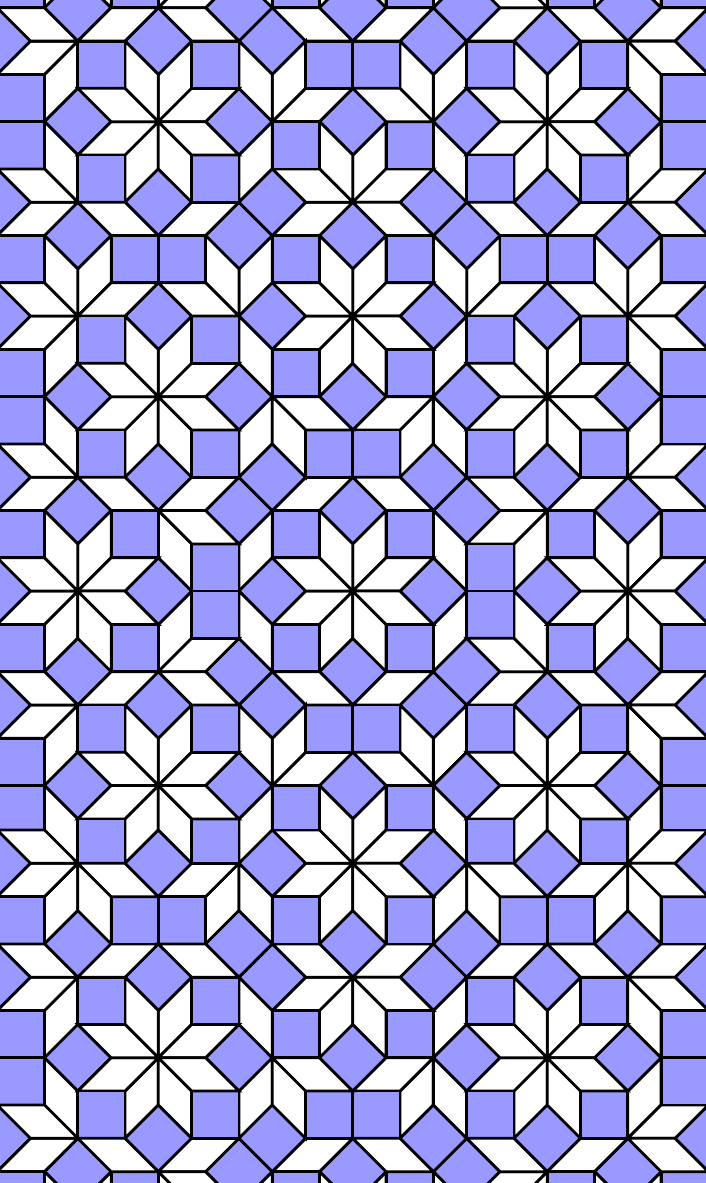}
\caption{Three different possible tilings (notching are not depicted).}
\label{fig:tilings}
\end{figure}

The rest of the paper is organized as follows.
Section \ref{sec:ammann_beenker} briefly recalls the history of Ammann-Beenker tilings.
Sections \ref{sec:tilings} and \ref{sec:shadows} introduce the main notions, Section \ref{sec:algebra} makes a simple but powerful connection with classic results of algebraic geometry, and the technical part of our proof is exposed in Section \ref{sec:planarity}.
We conclude in Section \ref{sec:conclusion} by formally stating our main result (Theorem~\ref{th:main}).

\section{Ammann-Beenker tilings}\label{sec:ammann_beenker}

Ammann-Beenker tilings are non-periodic tilings of the plane by a square and a rhombus with a $45^\circ$ angle.
Enjoying a (local) $8$-fold symmetry, they became a popular model of the $8$-fold quasicrystals \cite{octogonal_quasicrystal}.
They were introduced by Ammann in the 1970s and Beenker in 1982, independantly and from different viewpoints.\\

On the one hand, Ammann defined these tilings as the ones that can be formed by two specific notched tiles and a ``key'' tile, with the non-periodicity deriving from the hierarchical structure enforced by the notching.
This can be compared to the first (and concomitant) definition of Penrose tilings \cite{penrose}.\\

On the other hand, following the algebraic approach of de Bruijn for Penrose tilings \cite{debruijn}, Beenker defined these tilings, that he called {\em Grid-Rhombus}, as digitizations of parallel planes in $\mathbb{R}^4$, with the non-periodicity deriving from the irrationality of the slope of these planes \cite{beenker}.
Unfortunately, Beenker was unaware of the work of the amateur mathematician Ammann, published only some years later \cite{GS}, and he was unable to find notched tiles which can form only these tilings.
Instead, he introduced the notching of Fig.~\ref{fig:tiles}, calling {\em Arrowed-Rhombus} the tilings which can be formed and proving that they strictly contain the Grid-Rhombus tilings.\\

To conclude this short review, let us mention that Ammann-Beenker tilings cannot be characterized by their local patterns, that is, for any $r\geq 0$, there exists a tiling whose patterns of radius $r$ all appear in an Ammann-Beenker tiling but which is not itself an Ammann-Beenker tiling \cite{burkov}.
Suitable notchings of tiles must thus carry some information over arbitrarily long distances!

\section{Octogonal tilings and planarity}\label{sec:tilings}

Let $\vec{v}_1,\ldots,\vec{v}_4$ be pairwise non-colinear unitary vector of the Euclidean plane.
We define the six rhombi $\{\lambda\vec{v}_i+\mu\vec{v}_j~|~0\leq\lambda,\mu\leq 1\}$, for $1\leq i<j\leq 4$, and we call {\em octogonal tiling} any covering of the Euclidean plane by translated rhombi, where rhombi can intersect only on a vertex or along a complete edge (Fig.~\ref{fig:tilings}).\\

Let $\vec{e}_1,\ldots,\vec{e}_4$ be the canonical basis of $\mathbb{R}^4$.
A {\em lift} of an octogonal tiling is obtained by mapping its rhombi onto faces of unit hypercubes $\mathbb{Z}^4$ so that any two rhombi adjacent along $\vec{v}_k$ are mapped onto unit faces adjacent along $\vec{e}_k$.
This is a two-dimensional surface of $\mathbb{R}^4$ which is uniquely defined up to translation.\\

An octogonal tiling is said to be {\em planar} if there are a two-dimensional plane $E\subset \mathbb{R}^4$ and $t\geq 1$ such that it can be lifted into the ``slice'' $E+[0,t]^4$.
The plane $E$ is called its {\em slope} and the smallest suitable $t$ its {\em thickness} (both are unique).
A planar octogonal tiling can be seen as a digitization of its slope.\\

For example, the Ammann-Beenker tilings are the planar octogonal tilings of thickness one whose slope is generated by $(\cos\frac{k\pi}{4})_{0\leq k<4}$ and $(\sin\frac{k\pi}{4})_{0\leq k<4}$.\\

Planar octogonal tilings form a subclass of the so-called {\em projection tilings}.
Those of thickness one are periodic for a rational slope, {\em quasiperiodic} otherwise, {\em i.e.}, any pattern of radius $r$ which appears somewhere in a tiling reappears in this tiling at a distance uniformly bounded in $r$.
This perfect order weakens when the thickness increases, but the long range order nevertheless persists.

\section{Shadows and subperiods}\label{sec:shadows}

The {\em $k$-th shadow} of an octogonal tiling is the orthogonal projection of its lift along $\vec{e}_k$.
Formally, a $k$-th shadow is a lift of an octogonal tiling, {\em i.e.}, a two-dimensional surface of $\mathbb{R}^4$, but since it does not contain unit faces with the edge $\vec{e}_k$, it can be convenient to see it as a two-dimensional surface of $\mathbb{R}^3$.\\

A {\em period} of a shadow is a translation vector leaving invariant the shadow.
The {\em subperiods} of an octogonal tilings are the periods of its shadows.\\

Fig.~\ref{fig:shadows} depicts the fourth shadows of the tilings of Fig.~\ref{fig:tilings}: they are periodic.
Actually, the alternation of rhombus orientations in these tilings, discussed in the introduction, precisely enforces a period for each shadow.
Formally, one checks that with $\vec{v}_k=\textrm{e}^{\mathrm{i}\frac{k\pi}{4}}$ (complex notation) for $1\leq k\leq 4$, the $k$-th shadow of any such tiling admits the period $\vec{p}_k$ defined by
$$
\vec{p}_1=\vec{e}_2-\vec{e}_4,
\qquad
\vec{p}_2=\vec{e}_1+\vec{e}_3,
\qquad
\vec{p}_3=\vec{e}_2+\vec{e}_4,
\qquad
\vec{p}_4=\vec{e}_1-\vec{e}_3.
$$

\begin{figure}[hbtp]
\centering
\includegraphics[width=0.31\textwidth]{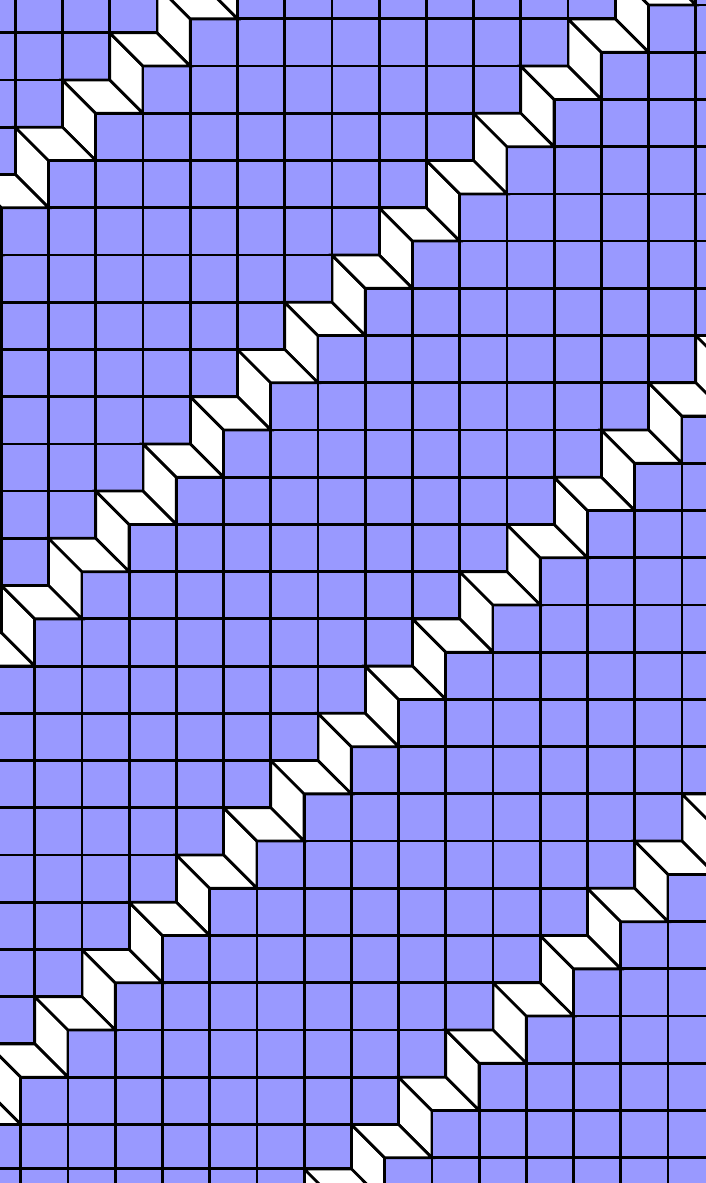}
\hfill
\includegraphics[width=0.31\textwidth]{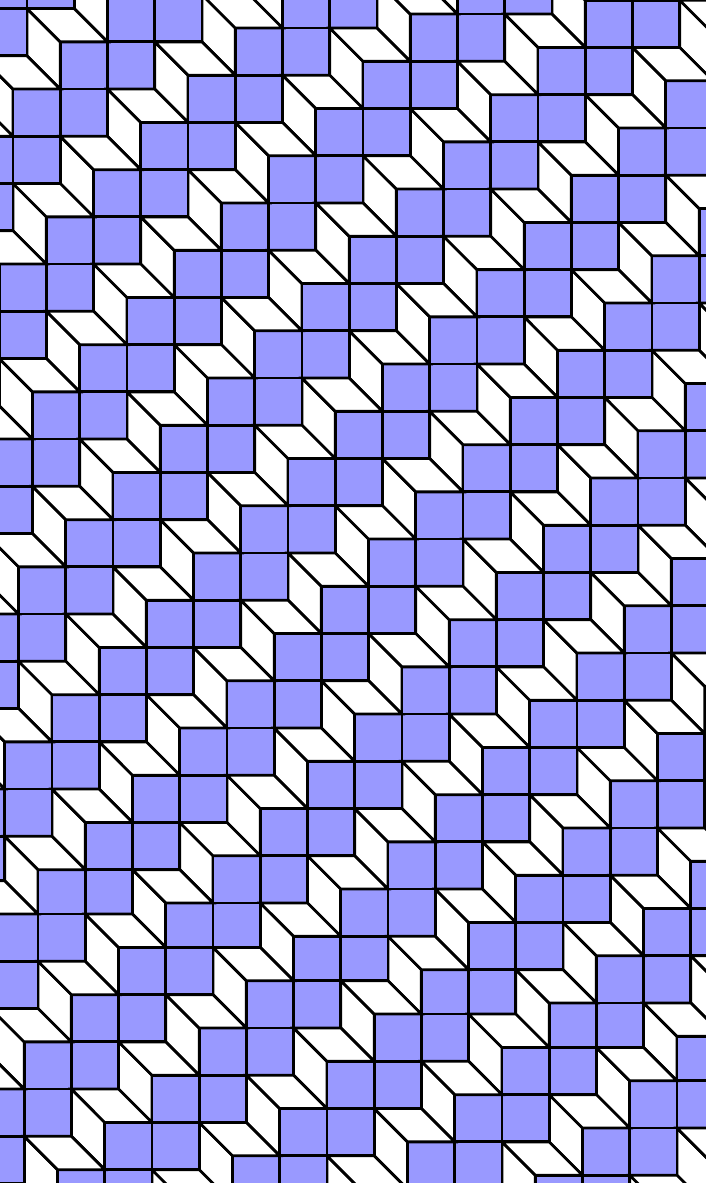}
\hfill
\includegraphics[width=0.31\textwidth]{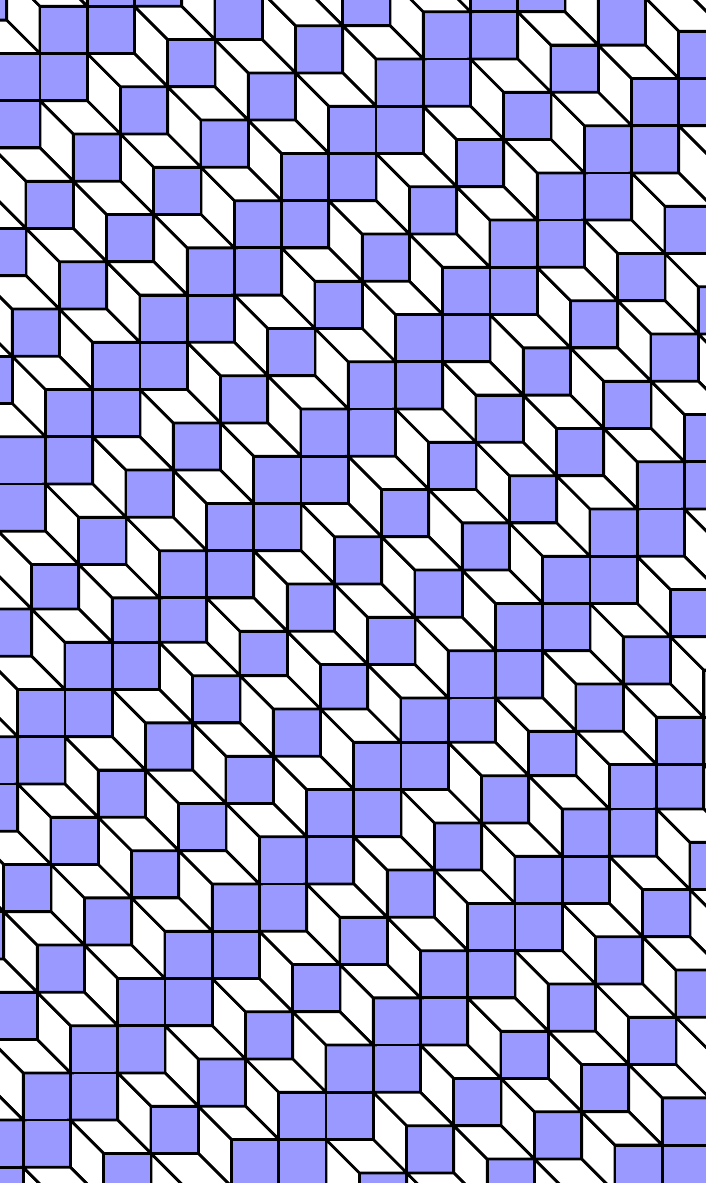}
\caption{Shadows of the tilings depicted on Fig.~\ref{fig:tilings}.}
\label{fig:shadows}
\end{figure}

\section{Grassmann coordinates and Plücker relations}\label{sec:algebra}

First, recall (see, {\em e.g.}, \cite{HP}, chap. 7) that a two-dimensional plane $E$ of $\mathbb{R}^4$ generated by $(u_1,u_2,u_3,u_4)$ and $(v_1,v_2,v_3,v_4)$ has for {\em Grassmann coordinates} the numbers $G_{ij}=u_iv_j-u_jv_i$, $1\leq i<j\leq 4$.
Theses coordinates are unique up to a common multiplicative constant ; one writes $E=(G_{12},G_{13},G_{14},G_{23},G_{24},G_{34})$.
Conversely, any $G_{ij}$'s not all equal to zero are the Grassmann coordinates of some two-dimensional plane of $\mathbb{R}^4$ if and only if they satisfy the {\em Plücker relation}
$$
G_{12}G_{34}=G_{13}G_{24}-G_{14}G_{23}.
$$

Then, it is not hard to see that if the $l$-th shadow of a planar octogonal tiling of slope $E$ admits a period $(p,q,r)$, then the Grassmann coordinates satisfy
$$
pG_{jk}-qG_{ik}+rG_{ij}=0,
$$
where $l\notin\{i,j,k\}$.
Indeed, if $E$ is generated by $(u_1,u_2,u_3,u_4)$ and $(v_1,v_2,v_3,v_4)$, then the $l$-th shadow can be seen as a digitization of the plane of $\mathbb{R}^3$ generated by $(u_i,u_j,u_k)$ and $(v_i,v_j,v_k)$.
If $(p,q,r)$ is a period of this plane, it belongs to this plane and thus has a zero dot product with the normal vector $(G_{jk},-G_{ik},G_{ij})$.\\

One can also use shadows to show that in any planar octogonal tiling of slope $E$, the ratio between the proportions of tiles with edges $\vec{v}_i$ and $\vec{v}_j$ and those with edges $\vec{v}_k$ and $\vec{v}_l$ is $|G_{ij}/G_{kl}|$.\\

Now, consider a tiling by tiles of Fig.~\ref{fig:tiles}: it is octogonal up to the notching.
If we assume that it is planar, then its subperiods yield
$$
G_{23}=G_{34},
\qquad
G_{14}=G_{34},
\qquad
G_{12}=G_{14},
\qquad
G_{12}=G_{23},
$$
and plugging this into the Plücker relation, a short computation shows that the slope must be one of the planes
$$
E_0:=(0,0,0,0,1,0),\qquad
E_{t\neq 0}:=(1,t,1,1,2/t,1),\qquad
E_{\infty}:=(0,1,0,0,0,0).
$$
Conversely, any planar octogonal tiling with one of these slopes and thickness one satisfies the alternation of rhombi orientations (two rhombi with the same orientation would not fit into the slice), thus can be tiled by the tiles of Fig.~\ref{fig:tiles}.\\

For example, the tilings of Fig.~\ref{fig:tilings} have respective slope $E_{1/4}$, $E_1$ and $E_{\sqrt{2}}$.
In the latter case, which is an Ammann-Beenker tiling, there is thus $\sqrt{2}$ rhombi for each square (since the square area is $\sqrt{2}$ times the rhombus area, each tile covers exactly half of the plane).
Tilings by squares only have slope $E_0$ or $E_{\infty}$.\\

\noindent However, nothing yet ensures that tilings by Fig.~\ref{fig:tiles} tiles are indeed planar!

\section{Planarity}\label{sec:planarity}

\begin{lemma}\label{lem:planarity}
Fig.~\ref{fig:tiles} tiles form only planar tilings of uniformly bounded thickness.
\end{lemma}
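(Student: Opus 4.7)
My plan is to first turn the notched-tile alternation constraint into four exact shadow periods, and then to use these four subperiods together with the finite local complexity of the two tiles to confine the $2$-dimensional lift to a slab of uniformly bounded thickness around a $2$-plane.

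I would begin by verifying the subperiods announced in Section~\ref{sec:shadows}: in any tiling by the tiles of Fig.~\ref{fig:tiles}, the $k$-th shadow is invariant under translation by $\vec{p}_k$. The intuition is that the notching forces, along any maximal corridor of aligned squares of a given direction, an alternation of rhombus orientations at the two endpoints, and following such a corridor corresponds in the shadow precisely to a $\vec{p}_k$-translation.

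Once the four shadow periods are established, for every vertex $v$ of the lift and every $k$ one can pick a vertex $v'$ whose $k$-th shadow projection differs from that of $v$ by $\vec{p}_k$; then $v'-v = \vec{p}_k + h_k(v)\vec{e}_k$ for some integer $h_k(v)$. This defines four quasi-translations $\phi_k$ on the vertex set of the lift, which compose in pairs to give ``almost-true'' translations by $\pm 2\vec{e}_j$ for each $j$, with integer corrections lying in only two of the four coordinate directions. Since the pure translation parts $\vec{p}_k$ span $\mathbb{R}^4$ while the lift is only $2$-dimensional, the commutators of the $\phi_k$'s cannot act freely; the relations this forces should, after averaging, single out a $2$-plane $E$ and a uniform bound $t$ such that the lift lies in $E+[0,t]^4$.

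The hard part is to show that the corrections $h_k(v)$ are uniformly bounded, independently of $v$ and of the tiling. A priori they could drift unboundedly, which would defeat any fixed bound on the thickness. The key observation should be that only finitely many patches can occur around a vertex of such a tiling, and that $h_k(v)$ is determined, via the combinatorics of the shadow, by a bounded-radius patch around $v$. Turning this finiteness into an explicit numerical bound, and then controlling how repeated compositions of the $\phi_k$'s accumulate these corrections rather than amplifying them, is where I expect the technical heart of the argument to lie.
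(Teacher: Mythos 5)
Your starting point is the right one (the four subperiods $\vec{p}_1,\dots,\vec{p}_4$ of the shadows are indeed the engine of the proof), but the proposal stops exactly where the lemma's content begins, and the route you sketch for closing the gap does not work. First, the quantity $h_k(v)$ is not determined by a bounded-radius patch around $v$: the period $\vec{p}_k$ is an invariance of the \emph{shadow}, i.e.\ of the projected surface, and the vertex $v'$ of the lift whose projection is $\pi_k(v)+\vec{p}_k$ is neither canonically defined (the projection $\pi_k$ restricted to the lift is not injective) nor locally findable from $v$ in the tiling. So finite local complexity gives you no bound on $h_k$, and in fact no uniform bound on single-step corrections is needed or used in a correct argument. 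Second, and more seriously, the step ``the commutators of the $\phi_k$'s cannot act freely, so the relations should, after averaging, single out a $2$-plane'' is precisely the assertion to be proved, and it is not a formal consequence of having four quasi-periods: whether a family of subperiods forces planarity depends delicately on their arithmetic (here, on the $\sqrt{2}$), and with a different set of four subperiods the conclusion can fail. Nothing in your outline uses the specific values of the $\vec{p}_k$'s, which is a sign that the argument cannot close as stated.

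For comparison, the actual proof parametrizes any lift as a graph $\vec{x}\mapsto\vec{x}+z_1(\vec{x})\vec{r}_1+z_2(\vec{x})\vec{r}_2$ over the plane $E_{\sqrt{2}}$, with values in the orthogonal plane $E_{-\sqrt{2}}$. Each subperiod is then converted into an exact analytic statement: the intersection of the $i$-th shadow with $\pi_i(\vec{x}+E')$ is a $\vec{p}_i$-periodic curve, hence stays at bounded distance from a line, which says that a specific linear combination of $z_1,z_2$ has uniformly bounded fluctuations along a specific direction $\vec{q}_i$ of $E_{\sqrt{2}}$ (the Lipschitz property of lifts makes the bound uniform). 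Three such statements combine into an approximate Cauchy functional equation $h(\sqrt{2}\mu)+h(-\lambda)\equiv h(\sqrt{2}\mu-\lambda)$, whose bounded-error solutions are linear up to bounded error; this is where the irrationality enters and where planarity with uniformly bounded thickness actually comes out. Your plan is missing both the mechanism that turns a subperiod into a bounded-fluctuation statement and the functional-equation step that turns several such statements into linearity.
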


\begin{proof}
Let $E:=E_{\sqrt{2}}$.
One checks that the orthogonal projection of the $\vec{e}_i$'s onto $E$ are pairwise non-colinear vectors.
Let us identify $E$ with the two-dimensional Euclidean plane and the above projections (up to rescaling) with the $\vec{v}_i$'s which define the tiles, so that the orthogonal projection onto $E$ is a homeomorphism from any lift of any tiling of the Euclidean plane by these tiles onto $E$.
Let $\mathcal{T}$ be such a tiling and $\mathcal{S}$ be a lift of it.
Define
$$
\vec{q}_1=\vec{p}_1+\sqrt{2}\vec{e}_1,
\qquad
\vec{q}_2=\vec{p}_2+\sqrt{2}\vec{e}_2,
\qquad
\vec{q}_3=\vec{p}_3+\sqrt{2}\vec{e}_3.
\qquad
\vec{q}_4=\vec{p}_4-\sqrt{2}\vec{e}_4.
$$
Those are pairwise non-colinear vectors of $E$.
Let also $\vec{r}_i$ be obtained by changing $\sqrt{2}$ in $-\sqrt{2}$ in $\vec{q}_i$.
The $\vec{r}_i$'s are pairwise non-colinear vectors of $E':=E_{-\sqrt{2}}$.
One checks that $E$ and $E'$ are orthogonal planes, so that there exist two real functions $z_1$ and $z_2$ defined on $E$ such that the lift $\mathcal{S}$ is the image of $E$ under
$$
\rho~:~\vec{x}\mapsto\vec{x}+z_1(\vec{x})\vec{r}_1+z_2(\vec{x})\vec{r}_2.
$$
Let us show that the subperiods of $\mathcal{T}$ enforce the map $\rho$ to be almost linear.
Let $\pi_i$ denotes the orthogonal projection along $\vec{e}_i$.
One has $\pi_i(\vec{q}_i)=\pi_i(\vec{r}_i)=\vec{p}_i$.
For any $\vec{x}\in E$, the plane $\pi_i(\vec{x}+E')$ intersects the shadow $\pi_i(\mathcal{S})$ along the curve
$$
\mathcal{C}_i(\vec{x})=\{\pi_i(\vec{x})+z_1(\vec{x}+\lambda\vec{q}_i)\pi_i(\vec{r}_1)+z_2(\vec{x}+\lambda\vec{q}_i)\pi_i(\vec{r_2})~|~\lambda\in\mathbb{R}\}.
$$
Since both $\pi_i(\mathcal{S})$ and $\pi_i(\vec{x}+E')$ are $\vec{p}_i$-periodic, so is $\mathcal{C}_i(\vec{x})$.
In particular, it stays at bounded distance from some line directed by $\vec{p}_i$.
For $i=1$, since $\pi_1(\vec{r}_1)=\vec{p}_1$, this ensures that $\lambda\mapsto z_2(\vec{x}+\lambda\vec{q}_1)$ is uniformly bounded.
In other words, $z_2$ has bounded fluctuations in the direction $\vec{q}_1$.
Similarly, for $i=2$, $\pi_2(\vec{r}_2)=\vec{p}_2$ yields that $z_1$ has bounded fluctuations in the direction $\vec{q}_2$.
For $i=3$, one computes
$$
\vec{p}_3=-\pi_3(\vec{r}_1)-\sqrt{2}\pi_3(\vec{r}_2),
$$
what yields bounded fluctuations for $z_2-\sqrt{2}z_1$ in the direction $\vec{q}_3$.
Since $\vec{q}_1$ and $\vec{q}_2$ form a basis of $E$, let $z_i(\lambda,\mu)$ stand for $z_i(\lambda\vec{q}_1+\mu\vec{q}_2)$, $i\in\{1,2\}$, and write $f\equiv g$ if the difference of two functions $f$ and $g$ is uniformly bounded.
The bounded fluctuations of $z_1$ and $z_2$ in the directions $\vec{q}_1$ and $\vec{q}_2$ yield the existence of real functions $f$ and $g$ such that $z_2(\lambda,\mu)\equiv f(\mu)$ and $z_1(\lambda,\mu)\equiv g(\lambda)$.
Further, since $\vec{q}_3=\sqrt{2}\vec{q}_2-\vec{q}_1$, the bounded fluctuations of $z_2-\sqrt{2}z_1$ in the direction $\vec{q}_3$ yield the existence of a real function $h$ such that $(z_2-\sqrt{2}z_1)(\lambda,\mu)\equiv h(\sqrt{2}\mu-\lambda)$.
Thus
$$
f(\mu)-\sqrt{2}g(\lambda)\equiv h(\sqrt{2}\mu-\lambda).
$$
Fix $\lambda=0$ to get $f(\mu)\equiv h(\sqrt{2}\mu)$.
Fix $\mu=0$ to get $-\sqrt{2}g(\lambda)\equiv h(-\lambda)$.
Hence
$$
h(\sqrt{2}\mu)+h(-\lambda)\equiv h(\sqrt{2}\mu-\lambda).
$$
From this easily follows that $h$, hence  $f$, $g$, $z_1$, $z_2$ and $\rho$, are linear (up to bounded fluctuations).
The tiling $\mathcal{T}$ is thus planar.
The thickness ({\em i.e.}, the fluctuations of $\rho$) is uniformly bounded because the lifts are lipschitz with a constant which depends only on $E$.
\end{proof}

\section{Conclusion}\label{sec:conclusion}

The following theorem summarizes the results obtained in the sections \ref{sec:algebra} and \ref{sec:planarity}:

\begin{theorem}\label{th:main}
Fig.~\ref{fig:tiles} tiles can form only planar tilings with slope in $\{E_t\}_{t\in\mathbb{R}\cup\{\infty\}}$ and uniformly bounded thickness, and they form at least those of thickness one.
\end{theorem}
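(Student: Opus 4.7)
The plan is to assemble Theorem~\ref{th:main} from two ingredients that are already in place: Lemma~\ref{lem:planarity} and the Grassmann/Plücker analysis of Section~\ref{sec:algebra}. The two halves of the statement (``only'' and ``at least those of thickness one'') are handled essentially independently, and neither requires new machinery.

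For the direct part, let $\mathcal{T}$ be any tiling by the tiles of Fig.~\ref{fig:tiles}. Forgetting the notching, $\mathcal{T}$ is an octogonal tiling, so Lemma~\ref{lem:planarity} immediately yields planarity together with uniformly bounded thickness. It then remains to pin down the slope. The notching forces the alternation rule of Fig.~\ref{fig:alternance}, and, as observed in Section~\ref{sec:shadows}, this rule guarantees that each of the four vectors $\vec{p}_1,\ldots,\vec{p}_4$ is a subperiod. Feeding them one by one into the linear constraint $pG_{jk}-qG_{ik}+rG_{ij}=0$ from Section~\ref{sec:algebra} reproduces the four equalities $G_{23}=G_{34}$, $G_{14}=G_{34}$, $G_{12}=G_{14}$, $G_{12}=G_{23}$ recorded there, after which the Plücker relation $G_{12}G_{34}=G_{13}G_{24}-G_{14}G_{23}$ becomes a short two-variable computation whose solutions are exactly the slopes $E_0$, $E_\infty$, and the one-parameter family $E_t=(1,t,1,1,2/t,1)$ for $t\in\mathbb{R}\setminus\{0\}$.

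For the converse part, fix $t\in\mathbb{R}\cup\{\infty\}$ and pick any planar octogonal tiling $\mathcal{T}_t$ of slope $E_t$ and thickness one; such a tiling exists by the standard cut-and-project construction applied to $E_t$ (the orthogonal projections of the $\vec{e}_i$'s onto $E_t$ are pairwise non-colinear, so the relevant canonical projection tiling is well defined). In a slice $E_t+[0,1]^4$ of thickness one, two rhombi with the same orientation cannot be adjacent, nor be joined by a strip of lined-up squares, without forcing the lift out of the slice; hence the alternation of orientations of Fig.~\ref{fig:alternance} holds automatically. This makes the notching of Fig.~\ref{fig:tiles} globally consistent, and $\mathcal{T}_t$ is therefore realised as a tiling by the notched tiles.

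The only genuinely hard step in this program is Lemma~\ref{lem:planarity}, which is where the passage from purely local notching constraints to a global planarity property takes place. Granted that lemma, the slope restriction is a finite linear-algebra exercise on the six Grassmann coordinates, and the existence half reduces to the observation that thickness one alone already enforces the alternation rule, so no further argument is needed.
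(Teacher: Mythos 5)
Your proposal is correct and follows essentially the same route as the paper, which explicitly presents Theorem~\ref{th:main} as a summary of the Grassmann/Pl\"ucker slope computation of Section~\ref{sec:algebra} (including the converse via the observation that thickness one forces the alternation of rhombus orientations) combined with Lemma~\ref{lem:planarity} for planarity and uniformly bounded thickness. The assembly you describe --- subperiods giving the four linear relations among the $G_{ij}$, the Pl\"ucker relation cutting the solutions down to $\{E_t\}$, and cut-and-project realizing the thickness-one tilings --- is exactly the paper's argument.
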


Moreover, the Ammann-Beenker tilings have the slope which maximizes the area covered by rhombi: they provide the cheapest way to tile your bathroom!
Let us make some final comments.
First, although we only prove that the thickness of tilings by Fig.~\ref{fig:tiles} tiles is uniformly bounded, we conjecture that the thickness is one, so that exactly all these tilings can be formed.
Second, note that among the tilings by Fig.~\ref{fig:tiles} tiles, Ammann-Beenker tilings are exactly (up to the thickness) those whose slope satisfies the relation $G_{13}=G_{24}$, {\em i.e.}, where the squares appear with the same frequency in their two possible orientations.
The above mentionned result of \cite{burkov} shows that this relation, although simple, cannot be enforced by local patterns: when $t$ tends towards $\sqrt{2}$, the tilings of slope $E_t$ and $E_{\sqrt{2}}$ (and thickness one) become locally indistinguishable.
Last, let us stress that, to our knowledge, this is the first example of a finite set of tiles which can form only planar tilings with infinitely many different slopes.

\paragraph{Acknowledgments} We would like to thank Thang~T.~Q.~Le for sending us the unpublished preprint \cite{le92}, which inspired the proof of Lemma~\ref{lem:planarity}.

\thebibliography{bla}
\bibitem{beenker} F.~P.~M.~Beenker, {\em Algebric theory of non periodic tilings of the plane by two simple building blocks: a square and a rhombus}, TH Report 82-WSK-04 (1982), Technische Hogeschool, Eindhoven.
\bibitem{debruijn} N.~G.~de Bruijn, {\em Algebraic theory of Penrose's nonperiodic tilings of the plane}, Nederl. Akad. Wetensch. Indag. Math. {\bf 43} (1981), pp. 39--66.
\bibitem{burkov} S.~E.~Burkov, {\em Absence of weak local rules for the planar quasicrystalline tiling with the 8-fold rotational symmetry}, Comm. Math. Phys. {\bf 119} (1988), pp. 667--675.
\bibitem{GS} B.~Grünbaum, G.~C.~Shephard, {\em Tilings and patterns}, Freemann, NY 1986.
\bibitem{HP} W.~V.~D.~Hodge, D.~Pedoe, {\em Methods of algebraic geometry}, vol. 1, Cambridge University Press, Cambridge, 1984.
\bibitem{le92} T.~T.~Q.~Le, {\em Necessary conditions for the existence of local rules for quasicrystals}, preprint (1992).
\bibitem{penrose} R.~Penrose, {\em The Role of aesthetics in pure and applied research}, Bull. Inst. Maths. Appl. {\bf 10} (1974).
\bibitem{octogonal_quasicrystal} N.~Wang, H.~Chen, K.~Kuo, {\em Two-dimensional quasicrystal with eightfold rotational symmetry}, Phys Rev Lett. {\bf 59} (1987), pp. 1010--1013.
\end{document}